\numberwithin{equation}{section}
\DeclareMathOperator{\loc}{loc}
\newtheorem{theorem}{Theorem}[section]
\newtheorem{lemma}{Lemma}[section]
\newtheorem{proposition}{Proposition}[section]
\newtheorem*{proposition*}{Proposition}
\newtheorem{corollary}{Corollary}[section]
\newtheorem*{corollary*}{Corollary}
\newtheorem{definition}{Definition}[section]
\newtheorem*{definitions*}{Definitions}
\newtheorem*{conjecture*}{\bf Conjecture}
\theoremstyle{remark}
\newtheorem{remark}{\bf Remark}[section]
\begin{document}

\title{A note on one-dimensional time fractional ODEs}

\author[1]{Yuanyuan Feng\thanks{yuanyuaf@andrew.cmu.edu}}
\author[2]{Lei Li\thanks{leili@math.duke.edu}}
\author[3]{Jian-Guo Liu\thanks{jliu@phy.duke.edu}}
\author[4]{Xiaoqian Xu \thanks{xxu@math.cmu.edu}}
\affil[1,4]{Department of Mathematics, Carnegie Mellon University, Pittsburgh, PA 15213, USA}
\affil[2]{Department of Mathematics, Duke University, Durham, NC 27708, USA.}
\affil[3]{ Departments of Mathematics and Physics, Duke University, Durham, NC 27708, USA.}

\date{}
\maketitle

\begin{abstract}
In this note, we prove or re-prove several important results regarding one dimensional time fractional ODEs following our previous work \cite{fllx17}. Here we use the definition of Caputo derivative proposed in \cite{liliu17frac1,liliu2017} based on a convolution group.  In particular, we establish generalized comparison principles consistent with the new definition of Caputo derivatives. In addition, we establish the full asymptotic behaviors of the solutions for $D_c^{\gamma}u=Au^p$. Lastly, we provide a simplified proof for the strict monotonicity and stability in initial values for the time fractional differential equations with weak assumptions.
\end{abstract}

\section{Introduction}
The fractional calculus in time has been used widely in physics and engineering for memory effect, viscoelasticity, porous media etc \cite{gm97, kst06, diethelm10, ala16,liliu2017note}. There is a huge amount of literature discussing time fractional differential equations. For instance, one can find some results in \cite{df02, diethelm10} using the classic Caputo derivatives.  In this paper, we study the following time fractional ODE:
\begin{equation}\label{eq:fode1}
D_c^{\gamma}u= f(t, u), ~ u(0)=u_0,
\end{equation}
for $\gamma\in (0, 1)$  and $f$ measurable. Here $D_c^{\gamma}u$ is the generalized Caputo derivative introduced in \cite{liliu17frac1,liliu2017}. As we will see later, this generalized definition is theoretically more convenient, since it allows us to take advantage of the underlying group structure.

As in \cite{liliu17frac1}, we use the following distributions $\{g_\beta\}$ as convolution kernels for $\beta\in (-1, 0)$:
\begin{gather*}
g_{\beta}(t)=\frac{1}{\Gamma(1+\beta)}D\left(\theta(t)t^{\beta}\right).
\end{gather*} 
Here $\theta(t)$ is the standard Heaviside step function, $\Gamma(\cdot)$ is the gamma function, and $D$ means the distributional derivative on $\mathbb{R}$. Indeed, $g_{\beta}$ can be defined for $\beta\in \mathbb{R}$ (see \cite{liliu17frac1}) so that $\{g_{\beta}: \beta\in\mathbb{R}\}$ forms a convolution group. In particular, we have 
\begin{gather}\label{eq:group}
g_{\beta_1}*g_{\beta_2}=g_{\beta_1+\beta_2}.
\end{gather}
Here since the support of $g_{\beta_i}$ ($i=1,2$) is bounded from left, the convolution is well-defined.  Now we are able to give the generalized definition of fractional derivatives:
\begin{definition}[\cite{liliu17frac1,liliu2017}] \label{def:caputo}
Let $0<\gamma<1$. Consider $u\in L_{\loc}^1[0, T)$. Given $u_0\in \mathbb{R}$, we define the $\gamma$-th order generalized Caputo derivative of $u$, associated with initial value $u_0$, to be a distribution in $\mathscr{D}'(-\infty, T)$ with support in $[0, T)$, given by 
\[
D_c^{\gamma}u=g_{-\gamma}*\Big((u-u_0)\theta(t)\Big).
\]
If$\ \lim_{t\to 0+}\frac{1}{t}\int_0^t|u(s)-u_0|ds=0$, we call $D_c^{\gamma}u$ the Caputo derivative of $u$.
\end{definition}
As in \cite{liliu17frac1}, if the function $u$ is absolutely continuous, the generalized definition reduces to the classical definition. However, the generalized definition is theoretically useful because it reveals the underlying group structure (see Proposition \ref{pro:equi}).
\begin{definition}
Let $T>0$. A function $u\in L_{\loc}^1[0, T)$ is a weak solution to \eqref{eq:fode1} on $[0, T)$ with initial value $u_0$, if $f(t, u(t))\in \mathscr{D}'(-\infty,T)$ and the equality holds in the distributional sense. We call a weak solution $u$ a strong solution if  (i). $\lim_{t\to 0+}\frac{1}{t}\int_0^t|u(s)-u_0|ds=0$;  (ii). both $D_c^{\gamma}u$ and $f(t, u(t))$ are locally integrable on $[0,T)$.
\end{definition}
By the group property \eqref{eq:group}, we have
\begin{proposition}[\cite{liliu17frac1}]\label{pro:equi}
Suppose $f\in L_{\loc}^{\infty}([0,\infty)\times\mathbb{R}; \mathbb{R})$. Fix $T>0$. Then, $u(t)\in L_{\loc}^1[0, T)$ with initial value $u_0$ is a strong solution of \eqref{eq:fode1} on $(0, T)$ if and only if $ \lim_{t\to 0_+} \frac{1}{t} \int_0^t  |u(s)-u_0| \,ds=0$ and it solves the following integral equation 
\begin{gather}\label{eq:vol}
u(t)=u_0+\frac{1}{\Gamma(\gamma)}\int_0^t(t-s)^{\gamma-1}f(s, u(s))ds,~\forall t\in (0, T).
\end{gather}
\end{proposition}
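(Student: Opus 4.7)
The plan is to prove the equivalence by exploiting the convolution group property \eqref{eq:group}, specifically the fact that $g_{\gamma}$ and $g_{-\gamma}$ are mutual inverses with $g_{\gamma}*g_{-\gamma}=g_0=\delta$. Both directions reduce to applying one of these kernels to both sides of the equation and invoking associativity of convolution, which is justified because all distributions involved have supports bounded from the left.

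For the forward direction, I would start from $g_{-\gamma}*\bigl((u-u_0)\theta(t)\bigr)=f(t,u(t))\theta(t)$ (understood in $\mathscr{D}'(-\infty,T)$) and convolve on the left with $g_{\gamma}$. Under the hypothesis that both $D_c^{\gamma}u$ and $f(t,u(t))$ are locally integrable, and that $f\in L^{\infty}_{\loc}$, the right-hand side $g_{\gamma}*\bigl(f(t,u(t))\theta(t)\bigr)$ coincides for $t\in(0,T)$ with the classical Riemann--Liouville integral $\frac{1}{\Gamma(\gamma)}\int_0^t (t-s)^{\gamma-1}f(s,u(s))\,ds$, since $g_{\gamma}(t)=\theta(t)t^{\gamma-1}/\Gamma(\gamma)$ for $\gamma\in(0,1)$. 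On the left, associativity together with \eqref{eq:group} yields $g_{\gamma}*g_{-\gamma}*\bigl((u-u_0)\theta(t)\bigr)=g_0*\bigl((u-u_0)\theta(t)\bigr)=(u-u_0)\theta(t)$, giving \eqref{eq:vol}. The initial-value condition $\lim_{t\to 0^+}\frac{1}{t}\int_0^t|u(s)-u_0|\,ds=0$ is built into the definition of a strong solution and is carried over directly.

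For the converse, suppose \eqref{eq:vol} holds together with the initial-value condition. I would rewrite \eqref{eq:vol} as $(u-u_0)\theta(t)=g_{\gamma}*\bigl(f(t,u(t))\theta(t)\bigr)$ as distributions on $(-\infty,T)$, then convolve both sides with $g_{-\gamma}$. Applying \eqref{eq:group} again collapses the right-hand side to $g_0*\bigl(f(t,u(t))\theta(t)\bigr)=f(t,u(t))\theta(t)$, while the left-hand side becomes exactly $D_c^{\gamma}u$ by Definition \ref{def:caputo}. The initial-value condition then upgrades the weak equation to a strong one.

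The main obstacle I anticipate is purely bookkeeping: one has to verify that the convolutions $g_{\pm\gamma}*\bigl((u-u_0)\theta(t)\bigr)$ and $g_{\pm\gamma}*\bigl(f(t,u(t))\theta(t)\bigr)$ are meaningful as distributions on $(-\infty,T)$ and that associativity applies. This follows from the fact that all factors have support bounded below (in $[0,\infty)$ or $[0,T)$), so convolutions are well-defined in $\mathscr{D}'$, and the group law \eqref{eq:group} extends associatively to such products. The only subtle point is confirming that the Riemann--Liouville integral on the right of \eqref{eq:vol} indeed represents $g_{\gamma}*\bigl(f(t,u(t))\theta(t)\bigr)$ as a locally integrable function; this uses $f(t,u(t))\in L^1_{\loc}[0,T)$ (part of the strong-solution hypothesis) and Young's inequality on bounded time intervals.
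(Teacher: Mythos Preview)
Your proposal is correct and follows precisely the approach the paper signals: the proposition is quoted from \cite{liliu17frac1} with the preface ``By the group property \eqref{eq:group}, we have,'' and no further proof is given in the present paper. Your argument---convolving with $g_{\gamma}$ (resp.\ $g_{-\gamma}$) and using $g_{\gamma}*g_{-\gamma}=g_0=\delta$ together with associativity for distributions supported in $[0,\infty)$---is exactly the group-inverse computation the paper alludes to, so there is nothing to contrast.
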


Using this integral formulation, the following has been shown in \cite{liliu17frac1} 
\begin{proposition}\label{pro:exisunique}
Suppose $f:  [0,\infty)\times (\alpha, \beta)\to \mathbb{R}$ is continuous and locally Lipschitz continuous in $u$. For any given initial value $u_0\in (\alpha, \beta)$, there is a unique strong solution, which either exists globally on $[0,\infty)$ or approaches the boundary of $(\alpha, \beta)$ in finite time. Moreover, this solution is continuous on the interval of existence.
\end{proposition}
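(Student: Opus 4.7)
The plan is to use Proposition \ref{pro:equi} to replace \eqref{eq:fode1} with the Volterra integral equation \eqref{eq:vol} and then run a Banach fixed point argument adapted to the fractional kernel. I would proceed in four steps: local existence and uniqueness via a contraction; extension to a maximal interval; the blow-up dichotomy; and continuity.

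For local existence, fix $u_0\in(\alpha,\beta)$, choose $r>0$ with $[u_0-r,u_0+r]\subset(\alpha,\beta)$, and set $M=\sup\{|f(t,v)|:t\in[0,1],\,|v-u_0|\le r\}$ (finite by continuity) and $L$ the Lipschitz constant of $f$ in $u$ on this box. On
\[
X_\delta=\{u\in C([0,\delta]):\|u-u_0\|_\infty\le r\},
\]
define
\[
(Ku)(t)=u_0+\frac{1}{\Gamma(\gamma)}\int_0^t(t-s)^{\gamma-1}f(s,u(s))\,ds.
\]
Using $\int_0^t(t-s)^{\gamma-1}ds=t^\gamma/\gamma$, one gets $\|Ku-u_0\|_\infty\le M\delta^\gamma/\Gamma(\gamma+1)$ and $\|Ku-Kv\|_\infty\le L\delta^\gamma/\Gamma(\gamma+1)\,\|u-v\|_\infty$. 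Choosing $\delta$ small enough makes $K$ a contraction of $X_\delta$ into itself, producing a unique continuous fixed point; the condition $\lim_{t\to 0^+}t^{-1}\int_0^t|u(s)-u_0|ds=0$ is immediate by continuity, so this is a strong solution. Global uniqueness on a common interval follows by letting $\tau=\sup\{t:u|_{[0,t]}=v|_{[0,t]}\}$ for two strong solutions $u,v$; if $\tau$ is strictly less than the endpoint, then $u(\tau)=v(\tau)$ and, for $t$ slightly beyond $\tau$,
\[
|u(t)-v(t)|\le\frac{L}{\Gamma(\gamma)}\int_\tau^t(t-s)^{\gamma-1}|u(s)-v(s)|\,ds,
\]
which forces $u\equiv v$ on a right neighborhood of $\tau$ by the same shrinking-norm argument, a contradiction.

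Now define $T_{\max}$ as the supremum of $T$ for which a strong solution exists on $[0,T)$ with values in $(\alpha,\beta)$. Assume $T_{\max}<\infty$ and that $u(t)$ does not approach $\partial(\alpha,\beta)$, i.e.\ $u(t)$ lies in a compact $K\Subset(\alpha,\beta)$ for all $t$ sufficiently close to $T_{\max}$. Then $f(\cdot,u(\cdot))$ is bounded near $T_{\max}$, and the splitting
\[
u(t_2)-u(t_1)=\frac{1}{\Gamma(\gamma)}\int_{t_1}^{t_2}(t_2-s)^{\gamma-1}f(s,u(s))\,ds+\frac{1}{\Gamma(\gamma)}\int_0^{t_1}\bigl[(t_2-s)^{\gamma-1}-(t_1-s)^{\gamma-1}\bigr]f(s,u(s))\,ds
\]
shows that $u$ is Cauchy as $t\to T_{\max}^-$: the first piece vanishes by integrability of the kernel, the second by dominated convergence. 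Hence $u^\ast=\lim_{t\to T_{\max}^-}u(t)$ exists in $(\alpha,\beta)$. To extend past $T_{\max}$, I rewrite the integral equation on $[T_{\max},T_{\max}+\delta]$ as
\[
u(t)=u_0+\frac{1}{\Gamma(\gamma)}\int_0^{T_{\max}}(t-s)^{\gamma-1}f(s,u(s))\,ds+\frac{1}{\Gamma(\gamma)}\int_{T_{\max}}^t(t-s)^{\gamma-1}f(s,u(s))\,ds,
\]
where the first integral is now a known continuous forcing term in $t$ (the past is frozen). Running the same fixed point estimate on the tail integral provides a strict extension, contradicting the maximality of $T_{\max}$. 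Continuity of the unique solution on $[0,T_{\max})$ follows directly from \eqref{eq:vol}, since $(t-s)^{\gamma-1}f(s,u(s))$ is integrable in $s$ and depends continuously on $t$.

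The main obstacle relative to classical ODEs is memory: one cannot treat the continuation past $T_{\max}$ as a fresh Cauchy problem, so the dichotomy has to be argued directly at the level of \eqref{eq:vol} by isolating the past contribution. The second, milder subtlety is that every uniqueness/contraction step must absorb the singular factor $(t-s)^{\gamma-1}$; fortunately this kernel is integrable and yields the shrinking prefactor $\delta^\gamma/\Gamma(\gamma+1)$, so the whole argument goes through with the usual structure.
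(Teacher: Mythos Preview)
The paper does not actually prove Proposition~\ref{pro:exisunique}; it only cites the result from \cite{liliu17frac1} (``Using this integral formulation, the following has been shown in \cite{liliu17frac1}''). So there is no in-paper proof to compare against.

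Your argument is the standard Picard--Lindel\"of scheme for Volterra equations with the weakly singular kernel $(t-s)^{\gamma-1}$, and it is essentially correct and presumably close to what the cited reference does. Two small points worth tightening. First, in the Cauchy estimate near $T_{\max}$, the phrase ``by dominated convergence'' for the second integral is imprecise as written, since you need a uniform modulus in $t_1,t_2$ rather than convergence for fixed $t_1$; the clean way is to integrate the kernel difference directly and use subadditivity of $t\mapsto t^{\gamma}$ to get
\[
\int_0^{t_1}\bigl[(t_1-s)^{\gamma-1}-(t_2-s)^{\gamma-1}\bigr]\,ds=\frac{t_1^{\gamma}+(t_2-t_1)^{\gamma}-t_2^{\gamma}}{\gamma}\le \frac{(t_2-t_1)^{\gamma}}{\gamma},
\]
which yields H\"older continuity and hence the Cauchy property outright. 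Second, your uniqueness argument is stated for continuous solutions (the fixed point lives in $C([0,\delta])$), whereas the proposition asserts uniqueness among \emph{strong} solutions in $L^1_{\loc}$. To close this, note that any strong solution satisfies \eqref{eq:vol} with $f(\cdot,u(\cdot))$ locally integrable; the same kernel estimate then shows $u$ is automatically continuous, after which your $C$-uniqueness applies.
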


Below in Section \ref{sec:comparison}, we will establish some generalized comparison principles consistent with the new definition of Caputo derivatives. In Section \ref{asymp}, we establish the full asymptotic behaviors of the solutions for $D_c^{\gamma}u=Au^p$. In Section \ref{monotonicity}, we provide a new proof for the strict monotonicity and stability in initial values with weak assumptions.

\section{Generalized comparison principles}\label{sec:comparison}

The comparison principles are important in the analysis of time fractional PDEs (See \cite{liliuwang2017cauchy}). There are many versions of comparison principles proved in literature using various definitions of Caputo derivatives. In \cite{liliu17frac1},  the authors assumed $f(t,\cdot)$ to be non-decreasing. In \cite[Lemma 2.6]{vergara2015}, $f(t,\cdot)$ was assumed to be non-increasing.
In \cite[Theorem 2.3]{ramirez2012}, there is no assumption on the monotonicity of $f(t,\cdot)$, but the function $v$ is assumed to be $C^1$ so that the pointwise value of $D_c^{\gamma}v$ can be defined. Combining these ideas and establishing a crucial lemma (Lemma \ref{lmm:importantineq}), we prove some generalized comparison principles in this section.  Similar to \cite{liliu17frac1}, we define the inequality in the distributional sense:
\begin{definition}\label{def:inequalitydistribution}
Let $U$ be an open interval. We say $f\in\mathscr{D}'(U)$ is a nonpositive (nonnegative) distribution if for any $\varphi\in C_c^{\infty}(U)$ with $\varphi\ge0$, we have  $\langle f, \varphi\rangle\le 0~~ (\langle f, \varphi\rangle\ge 0)$. 
We say $f_1\le f_2$ in the distributional sense for $f_1, f_2\in \mathscr{D}'(U)$, if $f_1-f_2$ is nonpositive. We say $f_1\ge f_2$ in the distributional sense if $f_1-f_2$ is nonnegative.
\end{definition}

In order to prove the comparison principle, we first prove the following auxiliary lemma:
\begin{lemma}\label{lmm:importantineq}
Suppose $u\in L_{\loc}^1[0, T)$ and $\lim_{t\to 0^+}\frac{1}{t}\int_0^t|u(s)-u_0|\,ds=0$. If there exists a function  $f\in L_{\loc}^1(0, T)$ such that on interval $(0, T)$ we have in the distributional sense that
$D_c^{\gamma}u\le f$,
then for any given $A\in\mathbb{R}$, we have in the distributional sense
\[
D_c^{\gamma}(u-A)^+\le \chi(u\ge A) f,~~\text{on } (0,T).
\]
\end{lemma}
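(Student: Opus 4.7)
The plan is to reduce the lemma to a convex chain rule for a smooth approximation of $(x-A)^+$ and then pass to the limit. I introduce a family $\phi_\varepsilon \in C^1(\mathbb{R})$, convex and nondecreasing, with $0\le\phi_\varepsilon'\le 1$, $\phi_\varepsilon \to (\,\cdot\,-A)^+$ uniformly on compacts, and $\phi_\varepsilon'\to \chi_{\{\cdot\ge A\}}$ pointwise (easily arranged by smoothing the right-derivative of $(x-A)^+$ slightly to the left of $A$). The target intermediate step is
\[
D_c^\gamma \phi_\varepsilon(u) \le \phi_\varepsilon'(u)\,f \quad\text{in } \mathscr{D}'(0,T),
\]
taking $\phi_\varepsilon(u_0)$ as the initial value for $\phi_\varepsilon(u)$. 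Since $\phi_\varepsilon$ is $1$-Lipschitz, the hypothesis $\tfrac{1}{t}\int_0^t|u-u_0|\,ds\to 0$ transfers to $\phi_\varepsilon(u)$ and, in the limit, to $(u-A)^+$, so all Caputo derivatives involved are well-defined. Taking $\varepsilon\to 0$, $\phi_\varepsilon(u)\to (u-A)^+$ and $\phi_\varepsilon(u_0)\to (u_0-A)^+$ in $L^1_{\loc}$, so $D_c^\gamma\phi_\varepsilon(u)\to D_c^\gamma(u-A)^+$ distributionally, and $\phi_\varepsilon'(u)f\to \chi_{\{u\ge A\}}f$ in $L^1_{\loc}$ by dominated convergence. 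Hence the intermediate inequality implies the lemma.

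The chief obstruction is that $D_c^\gamma u$ is only a distribution, so the product $\phi_\varepsilon'(u)D_c^\gamma u$ has no meaning a priori. I sidestep this with a causal time mollification. Extend $u(t):=u_0$ for $t<0$, take a nonnegative $\eta_\delta\in C_c^\infty([0,\delta])$ with $\int\eta_\delta=1$, and set $u_\delta:=u*\eta_\delta\in C^\infty(\mathbb{R})$. A direct check based on the causal support of $\eta_\delta$ shows $u_\delta(0)=u_0$ and
\[
(u_\delta-u_0)\theta \;=\; \bigl[(u-u_0)\theta\bigr]*\eta_\delta,
\]
whence by associativity of convolution $D_c^\gamma u_\delta=(D_c^\gamma u)*\eta_\delta$, which is now a smooth function. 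Because $\eta_\delta\ge 0$, convolving the distributional inequality $D_c^\gamma u\le f$ upgrades it to the pointwise bound $D_c^\gamma u_\delta(t)\le (f*\eta_\delta)(t)$ on $(\delta,T)$ between two smooth functions.

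For $u_\delta\in C^1$, the classical representation
\[
D_c^\gamma w(t)=\frac{1}{\Gamma(1-\gamma)}\left[\frac{w(t)-w_0}{t^\gamma}+\gamma\int_0^t\frac{w(t)-w(s)}{(t-s)^{1+\gamma}}\,ds\right]
\]
applies both to $w=u_\delta$ and $w=\phi_\varepsilon(u_\delta)$. The tangent-line inequality $\phi_\varepsilon(b)-\phi_\varepsilon(a)\le \phi_\varepsilon'(b)(b-a)$, valid for convex $\phi_\varepsilon$, applied termwise in the positive kernels $t^{-\gamma}$ and $(t-s)^{-1-\gamma}$ yields the pointwise convex chain rule $D_c^\gamma\phi_\varepsilon(u_\delta)\le \phi_\varepsilon'(u_\delta)\,D_c^\gamma u_\delta$. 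Since $\phi_\varepsilon'\ge 0$, combining with the previous bound gives $D_c^\gamma\phi_\varepsilon(u_\delta)\le \phi_\varepsilon'(u_\delta)(f*\eta_\delta)$ pointwise on $(\delta,T)$. I then let $\delta\to 0$ along a subsequence with $u_\delta\to u$ a.e.\ and in $L^1_{\loc}$: the left side converges distributionally to $D_c^\gamma\phi_\varepsilon(u)$ by continuity of convolution against $g_{-\gamma}$ and of $\phi_\varepsilon$, while the right side converges to $\phi_\varepsilon'(u)f$ in $L^1_{\loc}$ (bounded convergence for $\phi_\varepsilon'(u_\delta)\to \phi_\varepsilon'(u)$ combined with $f*\eta_\delta\to f$). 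This proves the intermediate step, and hence the lemma after the $\varepsilon\to 0$ passage described above.
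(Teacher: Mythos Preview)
Your proof is correct and follows essentially the same approach as the paper's: both arguments mollify $u$ in time to obtain a $C^1$ function, approximate $(x-A)^+$ by a smooth convex nondecreasing $\phi_\varepsilon$, invoke the pointwise convex chain rule $D_c^\gamma\phi_\varepsilon(u_\delta)\le \phi_\varepsilon'(u_\delta)D_c^\gamma u_\delta$, and then pass to the limit first in the time mollification and then in $\varepsilon$. The only minor differences are cosmetic---you use a causal mollifier (support in $[0,\delta]$) with the extension $u\equiv u_0$ on $(-\infty,0)$, which yields the clean identities $u_\delta(0)=u_0$ and $D_c^\gamma u_\delta=(D_c^\gamma u)*\eta_\delta$, whereas the paper uses an anticausal mollifier with support in $(-\epsilon,0)$ and cites an earlier proposition for the chain rule; your treatment of the passage from the distributional inequality $D_c^\gamma u\le f$ to the pointwise bound $D_c^\gamma u_\delta\le f*\eta_\delta$ on $(\delta,T)$ is in fact more explicit than the paper's corresponding ``it is not hard to see'' step.
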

\begin{proof}
First, recall the following result in \cite [Proposition 3.11]{liliu17frac1}: if $u\in C[0, T)\cap C^1(0, T)$ and $u\mapsto E(u)$ is $C^1$ and convex, we have
\[
D_c^{\gamma}E(u)\le E'(u)D_c^{\gamma}u.
\]
Now let us consider $\eta\in C_c^{\infty}(-1, 0)$ with $\eta\ge 0$ and $\int \eta\,dt=1$. Define $\eta^{\epsilon}(t)=\frac{1}{\epsilon}\eta(\frac{t}{\epsilon})$ and $u^{\epsilon}=\eta^{\epsilon}*u$.
As showed in \cite[Proposition 3.11]{liliu17frac1},  $u^{\epsilon}(0)\to u_0$ and $u^{\epsilon}(t)\to u(t)$ in $L_{\loc}^1[0,T)$.

Denote $E(u)=(u-A)^+$ and define  $E^{\delta}(u)=(E*\eta^{\delta})(u)$. Clearly, $(E^{\delta})'(u)=\eta^{\delta}*\chi(u\ge A)$ is nonnegative and increasing,  which implies that $E^{\delta}$ is a convex increasing function. Then, we have
\begin{align}\label{star}
D_c^{\gamma}E^{\delta}(u^{\epsilon})\le (E^{\delta})'|_{u^{\epsilon}}D_c^{\gamma}u^{\epsilon}.
\end{align}
It is not hard to see  $\limsup_{\epsilon\to 0}(E^{\delta})'|_{u^{\epsilon}}D_c^{\gamma}u^{\epsilon} \le (E^{\delta})'|_{u} f(t)$. Since $E^{\delta}(u^{\epsilon})$ converges to $E^{\delta}(u)$ in $L_{\loc}^1$ and $E^{\delta}(u^{\epsilon}(0))$ converges to $E^{\delta}(u_0)$, according to Definition \ref{def:caputo}, $D_c^{\gamma}E^{\delta}(u^{\epsilon})\to D_c^{\gamma}E^{\delta}(u)$ as distributions. Moreover,  notice that the inequality is preserved in the distributional sense (Definition \ref{def:inequalitydistribution}). We have $D_c^{\gamma}E^{\delta}(u)\le  (E^{\delta})'|_{u} f(t)$. Taking $\delta\to 0$, similarly we have $D_c^{\gamma}E^{\delta}(u)$ converges as distributions to $D_c^{\gamma}(u-A)^+$. Then the right hand side of \eqref{star} converges to $\chi(u\ge A)f(t)$, and the inequality is preserved in the distributional sense.
\end{proof}

As is well-known, if $u\in H^1(0, T)$, $D(u-A)^+=\chi(u-A) Du$. Since Caputo derivative is nonlocal, the equality is no longer true in general. However, we have similar inequalities and Lemma \ref{lmm:importantineq} provides an answer.

\begin{corollary}\label{cor:capcomp}
Suppose $u(t)$ is a locally integrable function with $\lim_{t\to 0^+}\frac{1}{t}\int_0^t|u(s)-u_0|\,ds=0$. Let $A\in \mathbb{R}$ and $t_1\in (0, T)$ is a Lebesgue point. If $u\le A$ for a.e. $t\le t_1$, and on the interval $(t_1, T)$ we have $D_c^{\gamma}u\le 0$ in the distributional sense, then we have $u\le A, a.e.~(0, T)$.
\end{corollary}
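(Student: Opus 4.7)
Set $v(t):=(u(t)-A)^+$. The strategy is to establish $D_c^\gamma v\le 0$ as a distribution on $(0,T)$ with initial value $0$, and then invert $g_{-\gamma}$ by convolving with $g_\gamma$: since $g_\gamma$ is a nonnegative $L^1_{\loc}$ function supported in $[0,\infty)$, and the group identity gives $g_\gamma*D_c^\gamma v=(v-0)\theta=v\theta$, we will obtain $v\theta\le 0$ distributionally, so $v\le 0$ a.e., and combined with the pointwise inequality $v\ge 0$ this forces $v\equiv 0$ a.e., i.e., $u\le A$ a.e.\ on $(0,T)$.

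To carry out this plan I first observe that the Lebesgue-point-type condition at $0$ together with $u\le A$ a.e.\ on $(0,t_1)$ forces $u_0\le A$, hence $(u_0-A)^+=0$; the analogous Lebesgue-point condition with initial value $0$ is then inherited by $v$, so the generalized Caputo derivative of $v$ is unambiguously defined with reference value $0$.

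Second, I split the analysis of $D_c^\gamma v$ into the two subintervals. On $(0,t_1)$, $v=0$ a.e., so the distribution $v\theta$ is supported in $[t_1,T)$; since $g_{-\gamma}$ is supported in $[0,\infty)$, the convolution $D_c^\gamma v=g_{-\gamma}*(v\theta)$ has support in $[t_1,T)$ and thus vanishes on $(0,t_1)$. On $(t_1,T)$, I apply Lemma \ref{lmm:importantineq} in a localized form: the mollification proof of that lemma is driven by the pointwise convexity inequality $D_c^\gamma E^\delta(u^\epsilon)\le (E^\delta)'(u^\epsilon)\,D_c^\gamma u^\epsilon$, which is valid throughout $(0,T-\epsilon)$. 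Tested against nonnegative test functions supported in $(t_1,T)$, the right-hand side, being the product of the nonnegative factor $(E^\delta)'(u^\epsilon)$ with $D_c^\gamma u^\epsilon$, converges as $\epsilon\to 0$ to the product of a nonnegative bounded function with the distribution $D_c^\gamma u$, which is nonpositive on $(t_1,T)$ by hypothesis; passing to $\delta\to 0$ then yields $D_c^\gamma v\le 0$ on $(t_1,T)$. Combining this with the previous step gives $D_c^\gamma v\le 0$ on all of $(0,T)$, and convolving with $g_\gamma$ completes the argument as outlined.

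The main technical obstacle is precisely the localization of Lemma \ref{lmm:importantineq}: the lemma is stated assuming $D_c^\gamma u\le f$ on all of $(0,T)$ for some $f\in L^1_{\loc}$, whereas here one only has a distributional one-sided bound restricted to $(t_1,T)$, and $D_c^\gamma u$ need not even be a locally integrable function on $(0,t_1)$. One has to verify that the mollification-and-limit procedure of that lemma remains valid when every test function is supported in $(t_1,T)$; the crucial point is that the pointwise convexity inequality used in the proof is local in time, and that multiplication of the nonpositive distribution $D_c^\gamma u|_{(t_1,T)}$ by the nonnegative bounded function $(E^\delta)'(u)$ preserves nonpositivity in the limit.
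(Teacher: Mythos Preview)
Your overall architecture matches the paper's: set $v=(u-A)^+$, show $D_c^\gamma v\le 0$ on $(0,T)$, then convolve with $g_\gamma$ to conclude $v\le 0$. The treatment on $(0,t_1)$ via support considerations is fine, and the observation that $(u_0-A)^+=0$ is correct.

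The gap is in the step on $(t_1,T)$. You want to pass to the limit in $\langle (E^\delta)'(u^\epsilon)\, D_c^\gamma u^\epsilon,\varphi\rangle$ for $\varphi\in C_c^\infty(t_1,T)$, $\varphi\ge 0$, and you claim the limit is $\langle (E^\delta)'(u)\cdot D_c^\gamma u,\varphi\rangle\le 0$. But both factors vary with $\epsilon$: $(E^\delta)'(u^\epsilon)\to (E^\delta)'(u)$ only in $L^p_{\loc}$ (the limit is merely bounded measurable, not continuous), while $D_c^\gamma u^\epsilon\to D_c^\gamma u$ only as distributions, with no uniform bound in $L^1$ or in total variation --- nothing rules out $D_c^\gamma u^\epsilon$ having large cancelling positive and negative parts. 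The product of two such sequences need not converge to the product of the limits, so the assertion ``converges\ldots to the product of a nonnegative bounded function with the distribution $D_c^\gamma u$'' is unjustified. It is also a warning sign that your argument never invokes the Lebesgue-point hypothesis on $t_1$.

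The paper's device is tailored to this difficulty. One mollifies to $u^\epsilon$ and then subtracts a small correction $C(\epsilon)\,\theta(t)t^\gamma/\Gamma(1+\gamma)$, obtaining $v^\epsilon$ with $v^\epsilon\le A$ on all of $[0,t_1+\epsilon]$; the Lebesgue-point assumption on $t_1$ is precisely what guarantees $C(\epsilon)\to 0$. Lemma~\ref{lmm:importantineq} then applies to the smooth $v^\epsilon$ with $f=D_c^\gamma v^\epsilon\in L^1_{\loc}$, and because $(v^\epsilon-A)^+$ vanishes on $[0,t_1+\epsilon]$ one may replace the $u$-dependent factor $\chi(v^\epsilon\ge A)$ by the $u$-independent cutoff $\chi(t\ge t_1+\epsilon)$. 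With this fixed cutoff the $\epsilon\to 0$ limit is routine, since for any test function the cutoff is eventually constant on its support. This decoupling of the two varying factors is the ingredient missing from your proposal.
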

Let $u^{\epsilon}$ be the mollification in the proof of  Lemma \ref{lmm:importantineq}. Consider $v^{\epsilon}=u^{\epsilon}-\frac{C(\epsilon) \theta(t)}{\Gamma(1+\gamma)}t^{\gamma}$ such that $v^{\epsilon}\le A$ for $t\in [0, t_1+\epsilon]$. $C(\epsilon)\to 0$ since $t_1$ is a Lebesgue point. Applying Lemma \ref{lmm:importantineq}, $D_c^{\gamma}(v^{\epsilon}-A)^+\le \chi(t\ge t_1+\epsilon)(D_c^{\gamma}u^{\epsilon}-C(\epsilon))\le \chi(t\ge t_1+\epsilon)(D_c^{\gamma}u^{\epsilon}-\eta_{\epsilon}*D_c^{\gamma}u)$.  Taking $\epsilon\to 0$ yields $D_c^{\gamma}(u-A)^+\le 0$. The details are left to readers. Now several versions of comparison principles can be stated as follows:
\begin{theorem}
\begin{enumerate}[(i)]
\item Suppose $u_i\in L_{\loc}^1[0, T)$ with $\lim_{t\to 0^+}\frac{1}{t}\int_0^t|u_i(s)-u_{i,0}|\,ds=0$ ($i=1,2$). Suppose $u_1(t)\le u_2(t)$ on $[0, t_1]$ for a Lebesgue point $t_1$, and  the $\gamma$-th Caputo derivatives of $u_1$, $u_2$ on $[0, t_1]$ are locally integrable. Define
\[
h_i(t)=u_{i,0}+\frac{1}{\Gamma(\gamma)}\int_0^{t\wedge t_1}(t-s)^{\gamma-1}D_c^{\gamma}u_i(s)\,ds,~i=1,2.
\]
Then, $h_1(t)\le h_2(t)$ for all $t\in [0, T]$. Moreover,  assume there exists a measurable function $f(t,u)$ such that (i) $f(\cdot, u_i(\cdot))$ ($i=1,2$) is locally integrable on $[t_1,T)$; (ii)  $f(t,\cdot)$ is non-decreasing on $[t_1, T)$; (iii) $D_c^{\gamma}u_1\le f(t, u_1)$ and $D_c^{\gamma}u_2\ge f(t, u_2)$ in the distributional sense on $(t_1, T)$, then $u_1\le u_2$ a.e. on $[0, T)$.

\item Suppose $u_i\in L_{\loc}^1[0, T)$ with $\lim_{t\to 0^+}\frac{1}{t}\int_0^t|u_i(s)-u_{i,0}|\,ds=0$ ($i=1,2$). If $u_1\le u_2$ on $[0, t_1]$ for a Lebesgue point $t_1$ and $D_c^{\gamma}(u_1-u_2)\le f(t, u_1)-f(t, u_2)$ as distributions on $(t_1, T)$, with $f(t,\cdot)$ being non-increasing on $(t_1, T)$ and $f(\cdot, u_i(\cdot))$ ($i=1,2$) being locally integrable on $[t_1, T)$, then $u_1\le u_2$ a.e on $[0, T)$.

\item Suppose $u(t)$ is a continuous function on $[0, T]$. If $u(t_1)=\sup_{0\le s\le t_1}u(s)$ for some $t_1\in (0, T]$ and $f(t)=D_c^{\gamma}u(t)$ is a continuous function, then $f(t_1)\ge 0$.
\end{enumerate}
\end{theorem}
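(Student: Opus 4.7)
The three claims use different tools from this section. Part~(iii) is a pointwise fractional Hopf-type lemma derived from a representation of $D_c^{\gamma}u$ at a left maximum. Parts~(i) and~(ii) are comparison principles whose common engine is Lemma~\ref{lmm:importantineq} together with Corollary~\ref{cor:capcomp}; (ii) is a one-line application, while (i) splits into an auxiliary claim for the truncated integrals $h_i$ and a subsequent Volterra argument.

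\textbf{Part (iii).} Continuity of $D_c^{\gamma}u$ near $0$ forces $u(0)=u_0$, since an initial jump $u(0)-u_0\neq0$ would contribute a non-integrable singularity of $g_{-\gamma}$ at the origin. Regularizing $u$ by a causal mollifier $\eta^{\varepsilon}$ yields a smooth $u^{\varepsilon}$ satisfying the classical integration-by-parts identity
\[
\Gamma(1-\gamma)\,D_c^{\gamma}u^{\varepsilon}(t)=\frac{u^{\varepsilon}(t)-u^{\varepsilon}(0)}{t^{\gamma}}+\gamma\int_0^{t}\frac{u^{\varepsilon}(t)-u^{\varepsilon}(s)}{(t-s)^{\gamma+1}}\,ds.
\]
I would take $t=t_1$ and send $\varepsilon\to 0^{+}$: the left-hand side tends to $\Gamma(1-\gamma)D_c^{\gamma}u(t_1)$ by the continuity assumption, the first summand on the right tends to $(u(t_1)-u_0)/t_1^{\gamma}\ge 0$, and the integrand of the second summand converges pointwise on $[0,t_1]$ to the non-negative quantity $(u(t_1)-u(s))/(t_1-s)^{\gamma+1}$, so Fatou's lemma supplies a non-negative lower bound. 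Hence $f(t_1)=D_c^{\gamma}u(t_1)\ge 0$.

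\textbf{Part (ii).} Put $v:=u_1-u_2$, so $v^{+}\equiv 0$ on $[0,t_1]$. Applying Lemma~\ref{lmm:importantineq} with $A=0$ to the distributional inequality $D_c^{\gamma}v\le f(t,u_1)-f(t,u_2)$ on $(t_1,T)$ gives
\[
D_c^{\gamma}v^{+}\le\chi(v\ge 0)\bigl(f(t,u_1)-f(t,u_2)\bigr)\qquad\text{on }(t_1,T).
\]
On $\{v\ge 0\}$ one has $u_1\ge u_2$, and the non-increasing property of $f(t,\cdot)$ makes the right-hand side $\le 0$. Corollary~\ref{cor:capcomp} applied to $v^{+}$ with $A=0$ and the Lebesgue point $t_1$ then forces $v^{+}\equiv 0$ a.e.\ on $[0,T)$.

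\textbf{Part (i), and the main obstacle.} For the first claim, I extend $u_i$ by $w_i:=u_i$ on $[0,t_1]$ and $w_i:=h_i$ on $(t_1,T)$. Using the Riemann--Liouville inversion $u=u_0+g_{\gamma}*D_c^{\gamma}u$ (valid by local integrability of $D_c^{\gamma}u_i$ on $[0,t_1]$) together with the group identity \eqref{eq:group}, a short calculation yields $D_c^{\gamma}w_i=\chi_{[0,t_1]}D_c^{\gamma}u_i$, which is locally integrable on $[0,T)$ and vanishes on $(t_1,T)$. Corollary~\ref{cor:capcomp} applied to $w_1-w_2$ then delivers $h_1\le h_2$ on $[0,T]$. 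Combining this with $u_i(t)=h_i(t)+\frac{1}{\Gamma(\gamma)}\int_{t_1}^{t}(t-s)^{\gamma-1}D_c^{\gamma}u_i(s)\,ds$ and the distributional sign information on $D_c^{\gamma}u_i-f(t,u_i)$ yields the Volterra-type inequality
\[
u_1(t)-u_2(t)\le\frac{1}{\Gamma(\gamma)}\int_{t_1}^{t}(t-s)^{\gamma-1}\bigl(f(s,u_1(s))-f(s,u_2(s))\bigr)\,ds,\quad t>t_1.
\]
The \emph{main obstacle} is that $f(t,\cdot)$ is only non-decreasing, not Lipschitz, so on $\{u_1>u_2\}$ the integrand cannot be dominated by a multiple of $(u_1-u_2)^{+}$ and the fractional Gronwall inequality does not apply directly. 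My proposed remedy is a monotone Lipschitz approximation $f_n\searrow f$ (each $f_n$ non-decreasing in $u$): apply the Lipschitz-case Gronwall bound to the Volterra inequality to conclude $u_1\le u_2$ for each $n$, then pass to the monotone limit using monotone convergence and the inequality-preserving property of convolution with the non-negative kernel $g_{\gamma}$.
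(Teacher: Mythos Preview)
Your treatment of (ii) and of the first claim $h_1\le h_2$ in (i) matches the paper: both feed the difference into Lemma~\ref{lmm:importantineq} and Corollary~\ref{cor:capcomp} with $A=0$. For (iii) you take a genuinely different route, the classical pointwise representation at a maximum combined with Fatou, whereas the paper perturbs $u$ to $u+\varepsilon t^{\gamma}/\Gamma(1+\gamma)$ (making $t_1$ the strict maximizer) and argues by contradiction via Corollary~\ref{cor:capcomp}. Your version is closer to the $C^1$ argument in the literature, but the step ``the left-hand side tends to $\Gamma(1-\gamma)D_c^{\gamma}u(t_1)$'' is not yet justified: distributional convergence of $D_c^{\gamma}u^{\varepsilon}$ to $D_c^{\gamma}u$ does not by itself give pointwise convergence at $t_1$; you would need to exploit the integral representation $u=u_0+g_{\gamma}*f$ with continuous $f$ to upgrade this to locally uniform convergence. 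The paper's perturbation avoids this limiting issue entirely.

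The genuine gap is in the second half of (i). Your monotone Lipschitz approximation $f_n\searrow f$ does not preserve the two-sided structure you need: from $D_c^{\gamma}u_2\ge f(t,u_2)$ and $f_n\ge f$ you \emph{cannot} conclude $D_c^{\gamma}u_2\ge f_n(t,u_2)$, so $u_2$ is no longer a supersolution for $f_n$ and the ``Lipschitz-case Gronwall bound'' has no valid input. Approximating from below, $f_n\nearrow f$, instead destroys the subsolution inequality $D_c^{\gamma}u_1\le f_n(t,u_1)$. Nor can you substitute $f_n$ directly into the displayed Volterra inequality, since $f(s,u_1)-f(s,u_2)\le f_n(s,u_1)-f_n(s,u_2)$ has no reason to hold. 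The paper sidesteps this: once $h_1\le h_2$ is established, it simply cites \cite[Theorem~4.10]{liliu17frac1}, a monotone-kernel Volterra comparison that uses only the order-preserving property of the map $w\mapsto h+\frac{1}{\Gamma(\gamma)}\int_{t_1}^{\cdot}(\cdot-s)^{\gamma-1}f(s,w(s))\,ds$ when $f(t,\cdot)$ is non-decreasing, and requires no Lipschitz regularity.
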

\begin{proof}
(i). Clearly, $D_c^{\gamma}h_i=D_c^{\gamma}u_i$ for $t\le t_1$ and $D_c^{\gamma}h_i=0$ for $t>t_1$. Let $u=h_1-h_2$, $A=0$ in Corollary \ref{cor:capcomp}, we find $h_1\le h_2$.
On $[t_1, T)$, we have
\[
u_1(t)\le h_1(t)+\frac{1}{\Gamma(\gamma)}\int_{t_1}^{t}(t-s)^{\gamma-1}f(s, u_1)\,ds,~~~~ u_2(t)\ge h_2(t)+\frac{1}{\Gamma(\gamma)}\int_{t_1}^{t}(t-s)^{\gamma-1}f(s, u_2)\,ds.
\]
As $h_1(t)\le h_2(t)$ and $f(t,\cdot)$ is non-decreasing, one has $u_1(t)\le u_2(t)$ (see \cite[Theorem 4.10]{liliu17frac1}).

(ii). Apply Lemma \ref{lmm:importantineq} for $u_1-u_2$ and $A=0$. (The proof is similar as in Corollary \ref{cor:capcomp}.)

(iii). Consider $u^{\epsilon}(t)=u(t)+\frac{\epsilon \theta(t)}{\Gamma(1+\gamma)} t^{\gamma}$, where $\epsilon>0$. Then, $t_1$ is the unique maximizer of $u^{\epsilon}$ on $[0,t_1]$. Let $f^{\epsilon}=D_c^{\gamma}u^{\epsilon}=f+\epsilon$. It suffices to show
\begin{gather}\label{eq:fepsilon}
f^{\epsilon}(t_1)\ge 0,~\forall \epsilon>0.
\end{gather}
Otherwise, there is an $\epsilon_0>0$ such that $f^{\epsilon_0}(t_1)<0$. Since $f^{\epsilon_0}$ is continuous, we can find $\delta>0$ such that on $[t_1-\delta, t_1]$ $f^{\epsilon_0}$ is negative and $u^{\epsilon_0}(t)\le u^{\epsilon_0}(t_1-\delta)$ for $t\le t_1-\delta$. Applying Corollary \ref{cor:capcomp}, we have $u^{\epsilon_0}(t)\le u^{\epsilon_0}(t_1-\delta)$ for $t\in [t_1-\delta, t_1]$, which is a contradiction. Taking $\epsilon\to 0$ then gives the result.
\end{proof}
\begin{remark}
Though the conditions here are weaker under the new definition of Caputo derivative, (ii) is essentially \cite[Lemma 2.6]{vergara2015} and  (iii) is well-known for $C^1$ functions (see, for example \cite{luchko2009,ramirez2012}).
\end{remark}

Now, we establish a generalized Gr\"onwall inequality (or another version of comparison principle), consistent with the new definition of Caputo derivative. The main construction is inspired by \cite{ramirez2012}.
\begin{theorem}\label{compar}
Suppose $f(t,u)$ is continuous and locally Lipschitz in $u$. Let $v(t)$ be a continuous function.
If $D_c^{\gamma}v\le f(t, v)$ in the distributional sense, and $D_c^{\gamma}u=f(t, u)$, with $v_0\le u_0$. Then, $v\le u$ on the common interval. Similarly, if we have $D_c^{\gamma}v\ge f(t, v)$ as distributions and $v_0\ge u_0$, then $v\ge u$ on the common interval.
\end{theorem}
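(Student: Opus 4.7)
The plan is to show that the non-negative function $\psi(t) := (v(t) - u(t))^+$ vanishes identically, by turning the distributional differential inequality into a pointwise fractional Gr\"onwall-type integral inequality. Fix any compact sub-interval $[0, T']$ of the common interval of existence; since $v$ and $u$ are continuous, both take values in a bounded set $[-M, M]$, so the local Lipschitz hypothesis supplies a constant $L = L(T', M)$ with $|f(t, a) - f(t, b)| \le L|a - b|$ on $[0, T'] \times [-M, M]$.

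First I would set $w = v - u$, with $w_0 = v_0 - u_0 \le 0$. Linearity of the generalized Caputo derivative gives $D_c^\gamma w = D_c^\gamma v - D_c^\gamma u \le f(t, v) - f(t, u)$ as distributions on $(0, T')$, and the right-hand side is continuous, hence locally integrable. The continuity of $w$ ensures $\lim_{t \to 0^+} \frac{1}{t} \int_0^t |w(s) - w_0|\, ds = 0$, so Lemma \ref{lmm:importantineq} applies with $A = 0$ to give, in the distributional sense,
\[
D_c^\gamma \psi \le \chi(v \ge u)\bigl[f(t, v) - f(t, u)\bigr] \le L \psi \qquad \text{on } (0, T'),
\]
where the second step is the Lipschitz bound on the set where $v - u = \psi$. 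The initial value attached to $\psi$ is $w_0^+ = 0$.

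Second, I would convolve both sides with the non-negative kernel $g_\gamma(t) = \frac{1}{\Gamma(\gamma)}\theta(t) t^{\gamma - 1}$; all supports are bounded from the left, so the convolution is well-defined, and convolution with a non-negative kernel preserves the sign of a distribution. Using the group identity $g_\gamma * g_{-\gamma} = \delta$ from \eqref{eq:group} together with $\psi(0) = 0$, this produces the pointwise inequality
\[
\psi(t) \le \frac{L}{\Gamma(\gamma)} \int_0^t (t - s)^{\gamma - 1} \psi(s)\, ds, \qquad t \in (0, T'].
\]
Iterating this bound $n$ times and computing the iterated fractional integral gives $\psi(t) \le \frac{(L t^\gamma)^n}{\Gamma(n\gamma + 1)} \|\psi\|_{L^\infty([0, T'])}$; sending $n \to \infty$ forces $\psi \equiv 0$ on $[0, T']$, so $v \le u$ on the whole common interval. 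The opposite-direction statement follows by the substitution $\tilde v = -v$, $\tilde u = -u$, $\tilde f(t, w) := -f(t, -w)$, which preserves continuity and local Lipschitz and reduces it to the case just proved.

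The main obstacle I anticipate is step two: carefully justifying that convolution with the non-negative but singular kernel $g_\gamma$ preserves the distributional inequality and that $g_\gamma * D_c^\gamma \psi = \psi\, \theta$. This is standard but subtle because $D_c^\gamma \psi$ is, a priori, only a distribution and $g_\gamma$ is not smooth at $0$; the one-sided support of all distributions involved and the group property from \eqref{eq:group} are essential.
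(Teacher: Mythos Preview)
Your approach is correct and genuinely different from the paper's. The paper does not apply Lemma~\ref{lmm:importantineq} to $v-u$ directly; instead it perturbs $v$ to $v^{\epsilon}=v-\epsilon\,E_{\gamma}(2Lt^{\gamma})$, so that $v^{\epsilon}(0)<u_0$ strictly and $D_c^{\gamma}v^{\epsilon}\le f(t,v^{\epsilon})-\epsilon L w$, and then argues by contradiction: at a first crossing time $t_1$ one has $v^{\epsilon}-u\le 0$ on $[0,t_1]$ and $D_c^{\gamma}(v^{\epsilon}-u)\le 0$ on a right-neighbourhood of $t_1$, so Corollary~\ref{cor:capcomp} forbids any crossing. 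Letting $\epsilon\to 0$ finishes. Your route---cutting to $\psi=(v-u)^+$, obtaining $D_c^{\gamma}\psi\le L\psi$, inverting to a Volterra inequality, and iterating---is shorter and conceptually more direct once the inversion is justified.

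On the obstacle you flag: it is a real point and needs one more sentence. Lemma~\ref{lmm:importantineq} yields $L\psi-D_c^{\gamma}\psi\ge 0$ only on the \emph{open} interval $(0,T')$, and convolution with $g_{\gamma}$ need not preserve an inequality that omits the left endpoint of the support (for instance $\delta_0$ vanishes on $(0,T')$ but $g_{\gamma}*\delta_0=g_{\gamma}>0$). The saving feature here is $\psi(0)=0$: writing $D_c^{\gamma}\psi=F'$ with $F(t)=\frac{1}{\Gamma(1-\gamma)}\int_0^t(t-s)^{-\gamma}\psi(s)\,ds$ continuous and $F(0)=0$, a partition-of-unity argument localised near $0$ shows that the distribution $L\psi\theta-D_c^{\gamma}\psi$ carries no negative mass at $0$, hence is non-negative on all of $(-\infty,T')$, and convolution with the non-negative kernel $g_{\gamma}$ is then legitimate. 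With that addition your proof is complete. The paper's perturbation argument trades this endpoint analysis for the auxiliary Mittag--Leffler function and a first-crossing contradiction; yours trades it for the fractional Gr\"onwall iteration.
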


\begin{proof}
We only prove the first claim (the proof for the other is similar). By Proposition \ref{pro:exisunique},  $D_c^{\gamma}u=f(t, u)$ with initial value $u(0)=u_0$ has a unique solution on the interval $[0, T_b)$, where $T_b$ is the largest time of existence. Moreover, $u$ is continuous on $[0, T_b)$.

Fix $T\in (0, T_b)$. Pick $M$ large enough so that $u(t)$ and $v(t)$ fall into $[0, T]\times [-M, M]$. Let $L$ be the Lipschitz constant of $f(t,\cdot)$ for the region $[0, T]\times [-2M, 2M]$. Consider
\[
v^{\epsilon}=v-\epsilon w.
\]
Here $w=E_{\gamma}(2L t^{\gamma})$ is the solution to $D_c^{\gamma}w=2Lw$ with initial value $1$, where $E_{\gamma}(z)=\sum_{n=0}^\infty\frac{z^n}{\Gamma(n\gamma+1)}$ is the Mittag-Leffler function \cite{hms11,mg00}. Clearly, if $\epsilon$ is sufficiently small, $v^{\epsilon}$ falls into $[0, T]\times [-2M, 2M]$.  Then, we find that in the distributional sense
\[
D_c^{\gamma}v^{\epsilon}=D_c^{\gamma}v-\epsilon 2L w
\le f(t,v)-\epsilon 2Lw\le f(t, v^{\epsilon})-\epsilon Lw.
\]
We claim that for all such small $\epsilon$,
\begin{gather}\label{eq:vperturbed}
v^{\epsilon}(t)\le u(t), \forall t\in [0, T].
\end{gather} 
 If not, define
 \[
 t_1=\sup\{t\in (0, T]: v^{\epsilon}(s)\le u(s),~\forall s\in [0, t] \}.
 \]
 Since $v^{\epsilon}(0)=v_0-\epsilon<u_0$, by continuity  we have $t_1>0$. By assumption, \eqref{eq:vperturbed} is not true, and we have $t_1<T$.  Consequently, there exists $\delta_1>0$, such that 
$v^{\epsilon}(t_1)=u(t_1)$  and $v^{\epsilon}(t)>u(t)$ for $t\in (t_1, t_1+ \delta_1)$.
 Moreover, 
\[
D_c^{\gamma}(v^{\epsilon}-u)\le f(t, v^{\epsilon})-\epsilon Lw-f(t,u).
\]
By continuity, for some $\delta_2\in (0,\delta_1)$, $D_c^{\gamma}(v^{\epsilon}-u)$ is a nonpositive distribution on the interval $(t_1, t_1+\delta_2)$. By Corollary \ref{cor:capcomp}, we have $v^{\epsilon}(t)\le u(t)$ for $t\in (t_1, t_1+\delta_2)$, which is a contradiction.  Hence, \eqref{eq:vperturbed} is true.
Taking $\epsilon\to 0$ in \eqref{eq:vperturbed} yields the result on $[0, T]$. Since $T$ is arbitrary, the result is true.
\end{proof}

\section{Asymptotic behaviors for a class of fractional ODEs}\label{asymp}
In this section, we study the solution curves to the following autonomous fractional ODEs:
\begin{gather}\label{eq:fracode2}
D_c^{\gamma}u=Au^p,~~u(0)=u_0>0.
\end{gather}
The monotonicity of the solutions to \eqref{eq:fracode2} and some partial results for the asymptotic behaviors have been established in our previous work \cite{fllx17}.  The asymptotic behaviors of the solutions for the $A<0, p>0$ case have also been discussed in \cite[Theorem 7.1]{vergara2015}.
However, the discussion on all the range of $A$ and $p$ is not complete. Here, we will give a complete description on asymptotic behaviors of the solution curves.

By Proposition \ref{pro:exisunique}, the strong solution $u$ to \eqref{eq:fracode2} exists on $[0, T_b)$ for $T_b\in (0,\infty]$. If $T_b<\infty$, either $\lim_{t\to T_b^-}u(t)=0$ or $\lim_{t\to T_b^-}u(t)=\infty$. We give a complete description regarding the solutions curves to \eqref{eq:fracode2}:
\begin{theorem}
Consider \eqref{eq:fracode2}. If $A=0$, then $u(t)=u_0$. If $A>0$,  then all the solutions are strictly increasing on $(0, T_b)$. If $A<0$, then all solutions are strictly decreasing before they touch $0$. 
\begin{enumerate}[(i)]
\item Suppose $A>0$. If $p>1$, then $T_b<\infty$ and $u(t)\sim  \left[\frac{\Gamma(\frac{p\gamma}{p-1})}{A\Gamma(\frac{\gamma}{p-1})}\right]^{\frac{1}{p-1}}(T_b-t)^{-\frac{\gamma}{p-1}}$, as $t\to T_b^-$. If $p=1$, then $u(t)=u_0E_{\gamma}(At^{\gamma})$. If $p<1$, then there exist $c_1>0$ and $c_2>0$ such that $
c_1 t^{\frac{\gamma}{1-p}}\leq u(t)\leq c_2t^{\frac{\gamma}{1-p}}, ~ t\geq 1$.
\item Suppose $A<0$. If $p<0$, the solution curve touches $u=0$ in finite time where the right hand side blows up. If $p=0$, then $u=u_0+Ag_{1+\gamma}$. If $p>0$, then $T_b=\infty$, and there exist $c_1>0, c_2>0$ such that 
$c_1t^{-\frac{\gamma}{p}}\le u(t) \le c_2t^{-\frac{\gamma}{p}}, ~ t\ge 1$.
\end{enumerate} 
\end{theorem}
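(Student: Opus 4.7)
The plan is to work throughout with the Volterra integral formulation
\[
u(t)=u_0+\frac{A}{\Gamma(\gamma)}\int_0^t(t-s)^{\gamma-1}u(s)^p\,ds
\]
from Proposition~\ref{pro:equi}, together with the comparison principle of Theorem~\ref{compar}. The case $A=0$ is immediate, and the monotonicity statements follow from the sign of the positive Abel kernel $(t-s)^{\gamma-1}$ (already established in \cite{fllx17}). The explicit subcases are dispatched quickly: for $p=1$ one substitutes the Mittag-Leffler series $u_0E_\gamma(At^\gamma)$ into the integral equation and verifies it term by term using the group law~\eqref{eq:group}; for $p=0$, the integral equation gives directly $u=u_0+Ag_{1+\gamma}$.

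The core of the argument in the remaining subcases is a self-similar ansatz. For a pure monomial $u(t)=K t^\beta$, the Beta-integral identity
\[
\int_0^t(t-s)^{\gamma-1}s^{p\beta}\,ds=\frac{\Gamma(\gamma)\Gamma(p\beta+1)}{\Gamma(p\beta+\gamma+1)}\,t^{p\beta+\gamma}
\]
forces $\beta=\gamma/(1-p)$ (equivalently $\beta=-\gamma/p$ in the decay formulation) and pins down $K$; this is the relevant tail ansatz for $(A>0,p<1)$ and $(A<0,p>0)$. For the blow-up case $(A>0,p>1)$, the analogous ansatz $u\approx C(T_b-t)^{-\gamma/(p-1)}$ near $t=T_b$, substituted into the integral via $s=T_b-\sigma(T_b-t)$, reduces the leading contribution to $\int_1^\infty(r-1)^{\gamma-1}r^{-p\gamma/(p-1)}\,dr$, which by the change of variable $r=1/y$ equals $\Gamma(\gamma/(p-1))\Gamma(\gamma)/\Gamma(p\gamma/(p-1))$. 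Matching leading orders gives exactly the constant $\bigl[\Gamma(p\gamma/(p-1))/(A\,\Gamma(\gamma/(p-1)))\bigr]^{1/(p-1)}$ stated in the theorem.

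To upgrade these formal matchings to the rigorous sandwich bounds $c_1 t^\beta\le u(t)\le c_2 t^\beta$, I would construct explicit super- and subsolutions of the form $C_\pm t^\beta$, corrected by a small additive constant to handle the initial data, and verify the fractional differential inequality using $D_c^\gamma(t^\beta)=\Gamma(\beta+1)/\Gamma(\beta-\gamma+1)\,t^{\beta-\gamma}$; Theorem~\ref{compar} then delivers the two-sided bound. Finite-time blow-up for $(A>0,p>1)$ follows by bootstrapping $u(t)\ge u_0$ through the integral equation to obtain $u(t)\ge c_n t^{\gamma(p^n-1)/(p-1)}$, which is incompatible with global existence. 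Finite-time hitting of zero for $(A<0,p<0)$ is handled by noting that monotonicity combined with $p<0$ yields $u^p\ge u_0^p$, whence $u(t)\le u_0-|A|u_0^p t^\gamma/\Gamma(1+\gamma)$, which is negative past a finite time.

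The main obstacle is extracting the sharp leading constant in the blow-up asymptotic for $(A>0,p>1)$: proving a two-sided order estimate is routine from the super/subsolution construction, but showing convergence to the unique value $C=\bigl[\Gamma(p\gamma/(p-1))/(A\Gamma(\gamma/(p-1)))\bigr]^{1/(p-1)}$ requires passing to the rescaled variable $w(t):=u(t)(T_b-t)^{\gamma/(p-1)}$ and arguing that any limit point of $w$ as $t\to T_b^-$ must satisfy the consistency relation $w^{p-1}=\Gamma(p\gamma/(p-1))/(A\Gamma(\gamma/(p-1)))$, whose only positive solution is $C$. The natural route is to show $w$ is asymptotically monotone and bounded, forcing the limit to exist and coincide with $C$; this is where most of the technical work should concentrate.
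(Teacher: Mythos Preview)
Your overall architecture matches the paper's: defer monotonicity and the $(A>0,p>1)$ asymptotics to \cite{fllx17}, cite or reproduce \cite{vergara2015} for $(A<0,p>0)$, handle the explicit cases directly, and for $(A>0,p<1)$ build sub- and supersolutions and invoke Theorem~\ref{compar}. Your treatment of $A=0$, $p=0$, $p=1$, and $(A<0,p<0)$ is essentially identical to the paper's.

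There is, however, a concrete gap in your supersolution for the case $(A>0,\,0<p<1)$. With $\beta=\gamma/(1-p)>0$ and $v(t)=C_+t^{\beta}+c$ (where $c\ge u_0$), one has
\[
D_c^{\gamma}v(t)=C_+\,\frac{\Gamma(\beta+1)}{\Gamma(p\beta+1)}\,t^{p\beta}\longrightarrow 0\quad\text{as }t\to0^+,
\]
while the right-hand side $A\,v(t)^p\to A\,c^p>0$. Thus the supersolution inequality $D_c^{\gamma}v\ge Av^p$ \emph{fails on an initial layer} for any choice of $C_+$ and $c$; an additive constant does not repair this. The paper handles exactly this difficulty by a genuinely piecewise construction: on $[0,1]$ it takes $v(t)=u_0+B_1\,t^{\gamma}/\Gamma(1+\gamma)$, so that $D_c^{\gamma}v\equiv B_1$ is a large constant dominating $Av^p$ there, and then switches to $B_2\,t^{\beta}$ for $t\ge1$, with $B_1$ chosen large enough to satisfy both the matching $B_2=u_0+B_1/\Gamma(1+\gamma)$ and the inequality on $[1,\infty)$. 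The subsolution is likewise piecewise (constant $u_0$ on $[0,t_0]$, then $a\,t^{\beta}$), though here a pure power $a\,t^{\beta}$ with small $a$ would also succeed. A parallel issue affects $(A<0,p>0)$, where the relevant exponent $-\gamma/p$ is negative and $t^{-\gamma/p}$ is singular at $t=0$, so an additive-constant correction is again inadequate; the paper simply cites \cite{vergara2015} for that case.

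On the $(A>0,p>1)$ blow-up constant: you correctly identify this as the delicate point, and your proposed route via $w(t)=u(t)(T_b-t)^{\gamma/(p-1)}$ is reasonable, but ``asymptotically monotone and bounded'' is not a consequence of anything established so far---fractional equations lack the local Gr\"onwall structure that would make monotonicity of $w$ automatic. The paper itself does not argue this and defers entirely to \cite{fllx17}, so this is less a gap than an honest acknowledgment that the work lives elsewhere.
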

\begin{proof}
The $A=0$ or $p=0$ cases are trivial. The monotonicity has been proved in \cite{fllx17}. The $A>0, p>1$ case has also been discussed there. Indeed, there is also an accurate estimate of $T_b$ in  \cite{fllx17}. The $p=1$ case is trivial.  The $A<0, p>0$ case has been discussed in \cite[Theorem 7.1]{vergara2015}. In fact, they established a version of comparison principle and used a subsolution and a supersolution to get $c_1t^{-\frac{\gamma}{p}}\le u(t) \le c_2t^{-\frac{\gamma}{p}}, \quad t\ge 1$.
For the case $A<0, p<0$, since the solution is decreasing, we have $D_c^{\gamma}u\le Au_0^p<0$ before $u$ touches zero. Hence, the claim follows.

Now, we establish the results for $A>0, p<1$ case. First, let us construct the sub-solution as follows:
\[
\omega(t)=
\begin{cases}
u_0, & t\in [0,t_0],\\
a t^{\frac{\gamma}{1-p}},& t\geq t_0.
\end{cases}
\]
Here $a>0$ is to be determined and $t_0$ is determined by $at_0^{\frac{\gamma}{1-p}}=u_0$. Clearly, $\omega$ is absolutely continuous on any finite interval. For $t<t_0$, $D_c^{\gamma}\omega=0\leq A\omega^{p}$. For $t\geq t_0$, we have
\[
D_c^{\gamma}\omega= \frac{a\gamma}{(1-p)\Gamma(1-\gamma)}\int_{t_0}^{t}\frac{\tau^{\frac{\gamma}{1-p}-1}}{(t-\tau)^{\gamma}}d\tau
< \frac{a \gamma B(\frac{\gamma}{1-p}, 1-\gamma)}{(1-p)\Gamma(1-\gamma)}t^{\frac{\gamma p}{1-p}} =
\frac{a\Gamma(\gamma/(1-p)+1)}{\Gamma(\gamma p/(1-p)+1)}t^{\frac{p\gamma}{1-p}},
\]
where $B(\cdot,\cdot)$ is the Beta function. Clearly, if we choose $a>0$ such that $\frac{a\Gamma(\gamma/(1-p)+1)}{\Gamma(\gamma p/(1-p)+1)}\le Aa^p$, then $D_c^{\gamma}\omega\le A\omega^p$. Such $a$ exists because $p<1$.

For the super-solution, let us consider
\[
v(t)=
\begin{cases}
u_0+B_1\frac{t^{\gamma}}{\Gamma(1+\gamma)},& t\in [0,1],\\
B_2 t^{\frac{\gamma}{1-p}}, & t\geq 1.
\end{cases}
\]
$B_2$ is determined by $B_2=u_0+\frac{B_1}{\Gamma(1+\gamma)}$. This choice of $B_2$ makes $v$ absolutely continuous on any finite interval. We now determine $B_1$. On $[0,1]$,  one has $D_c^{\gamma}v=B_1$.
For $t>1$, we have
\[
D_c^{\gamma}v=\frac{B_1\gamma}{B(1+\gamma,1-\gamma)}
\int_0^{1}\frac{\tau^{\gamma-1}}{(t-\tau)^{\gamma}}\,d\tau+
 \frac{B_2}{\Gamma(1-\gamma)}\frac{\gamma}{1-p}\int_{1}^{t}\frac{\tau^{\frac{\gamma}{1-p}-1}}{(t-\tau)^{\gamma}}d\tau.
\]
On $[1, 2]$, one has $D_c^{\gamma}v 
>\frac{B_1\gamma}{B(1+\gamma,1-\gamma)}\int_0^{1}\frac{\tau^{\gamma-1}}{(2-\tau)^{\gamma}}\,d\tau=B_1 C_1(\gamma)$.
For $t>2$, we have
\[
D_c^{\gamma}v>B_2 \frac{1}{\Gamma(1-\gamma)}\frac{\gamma}{1-p}t^{\frac{\gamma p}{1-p}}\int_{\frac{1}{t}}^1\frac{\tau^{\frac{\gamma}{1-p}-1}}{(1-\tau)^{\gamma}}d\tau\\
\geq B_2 t^{\frac{\gamma p}{1-p}} C_2(p,\gamma).
\]
It is clear that there exists $M_1(A, p, \gamma)$ such that as long
as $B_2\ge M_1$, $D_c^{\gamma}v\ge Av^p$ for $t\ge 2$ since $p<1$. For $v$ to be a super-solution, one needs 
\begin{gather*}
u_0+B_1\frac{1}{\Gamma(1+\gamma)}\ge M_1,~~~~~
B_1\min(1, C_1(\gamma))
\ge A \max\left(u_0^p, \Big(u_0+\frac{B_1}{\Gamma(1+\gamma)}\Big)^p 2^{\frac{p\gamma}{1-p}}\right).
\end{gather*}
Such $B_1$ exists since $p<1$. Hence, applying comparison principle Theorem \ref{compar} yields the result.
\end{proof}

\section{Strict monotonicity and stability in initial values}\label{monotonicity}

It is well-known that solution curves for well-behaved ODEs do not touch each other. However, for fractional ODEs, similar results are not trivial since the dynamics is non-Markovian. By the comparison principles (or generalized Gr\"onwall inequality), if $f(t,u)$ in \eqref{eq:fode1} is continuous and locally Lipschitz in $u$, $u(0)<v(0)$ implies $u(t)\le v(t)$ for $t\ge 0$. However we do not have strict inequality. In \cite[Theorem 6.12]{diethelm10}, the strict inequality has been established following a series of contraction techniques. Using our new definition of Caputo derivative, we provide a new proof of that solutions are strict monotone in initial values, by assuming $f\in L_{\loc}^{\infty}$. 

The following lemma (a variant of \cite[Lemma 3.4]{fllx17} or \cite[Theorem 1]{weis75}), is important:
\begin{lemma}\label{lmm:pos}
Let $r_{\lambda}(t)=-\frac{d}{dt}E_{\gamma}(-\lambda\Gamma(\gamma)t^{\gamma})$ be the resolvent for kernel $\lambda t^{\gamma-1}$ (in other words, $r_{\lambda}(t)+\lambda\int_0^t(t-s)^{\gamma-1}r_{\lambda}(s)ds=\lambda t^{\gamma-1}$).
Let $T>0$. Assume $h\in L^1[0,T]$, $h>0~a.e.$, satisfying 
\[
h(t)-\int_0^tr_{\lambda}(t-s) h(s)ds>0, ~a.e., \quad\forall \lambda>0.
\]

Suppose $v\in L^{\infty}[0,T]$, then the integral equation 
\begin{gather}\label{eq:varcoevol}
y(t)+\int_0^t(t-s)^{\gamma-1}v(s) y(s)ds=h(t)
\end{gather}
has a unique solution  $y(t)\in L^1[0, T]$. Moreover, $
y(t)>0, a.e..
$
\end{lemma}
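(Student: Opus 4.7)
My plan is to prove Lemma \ref{lmm:pos} in three steps: (i) establish existence and uniqueness of an $L^1$ solution by standard weakly-singular Volterra theory; (ii) use the resolvent $r_\lambda$ of the kernel $\lambda t^{\gamma-1}$ to rewrite the equation in the form $y-Ty = h - r_\lambda * h$, where $T$ is a positivity-preserving operator; (iii) invert $I-T$ by a Neumann series using quasi-nilpotence of $T$, obtaining $y$ as a sum of non-negative terms whose leading term is strictly positive a.e.\ by hypothesis.

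For (i), set $K\psi(t) = \int_0^t (t-s)^{\gamma-1} v(s)\psi(s)\,ds$. Iterating the weakly-singular kernel via the Beta identity $B(\gamma,k\gamma) = \Gamma(\gamma)\Gamma(k\gamma)/\Gamma((k+1)\gamma)$ yields $|K^n(t,s)| \le \|v\|_\infty^n\Gamma(\gamma)^n (t-s)^{n\gamma-1}/\Gamma(n\gamma)$, and hence $\|K^n\|_{L^1\to L^1} \le \|v\|_\infty^n\Gamma(\gamma)^n T^{n\gamma}/\Gamma(n\gamma+1)$, which is summable. Therefore $(I+K)^{-1} = \sum_{n\ge 0}(-K)^n$ is a bounded operator on $L^1[0,T]$, producing the unique $L^1$ solution $y$ of \eqref{eq:varcoevol}.

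For (ii), fix $\lambda \ge \|v\|_\infty$ and rewrite \eqref{eq:varcoevol} as $y+\lambda G y = h + G[(\lambda-v)y]$, with $G\phi := t^{\gamma-1}*\phi$. The defining equation of $r_\lambda$ is equivalent to $(I+\lambda G)^{-1} = I-R_\lambda$, where $R_\lambda\phi := r_\lambda*\phi$. A short convolution manipulation of the resolvent identity gives the key relation $G - R_\lambda G = \lambda^{-1}R_\lambda$: indeed, $r_\lambda + \lambda(t^{\gamma-1}*r_\lambda) = \lambda t^{\gamma-1}$ rearranges to $t^{\gamma-1} - r_\lambda*t^{\gamma-1} = \lambda^{-1}r_\lambda$, and then one convolves with an arbitrary $\phi$. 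Applying $(I-R_\lambda)$ to the rewritten equation therefore yields
\[
y = (h - r_\lambda * h) + Ty, \qquad T\psi := \lambda^{-1}\, r_\lambda * \big[(\lambda - v)\psi\big].
\]

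For (iii), $r_\lambda \ge 0$ because $E_\gamma(-\cdot)$ is completely monotone on $[0,\infty)$ for $\gamma\in(0,1)$, so $E_\gamma(-\lambda\Gamma(\gamma)t^\gamma)$ is non-increasing in $t$; combined with $\lambda-v\ge 0$ a.e., this makes $T$ positivity-preserving on $L^1[0,T]$. From $r_\lambda\ge 0$ and the resolvent equation, $r_\lambda(u)\le \lambda u^{\gamma-1}$, so the kernel of $T$ is dominated by $2\lambda(t-s)^{\gamma-1}$ and the same weakly-singular iteration as in (i) gives $\|T^n\|_{L^1\to L^1} \to 0$. Thus $(I-T)^{-1} = \sum_{n\ge 0} T^n$ is a bounded positivity-preserving operator on $L^1[0,T]$, and
\[
y = \sum_{n=0}^\infty T^n (h - r_\lambda * h) \ge h - r_\lambda * h > 0 \quad \text{a.e.}
\]
by the hypothesis on $h$. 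I expect the main obstacle to be verifying the convolution identity $G - R_\lambda G = \lambda^{-1}R_\lambda$ in step (ii); this is the substantive computation that converts the equation into one driven by a positivity-preserving operator, after which everything reduces to the standard weakly-singular Volterra template.
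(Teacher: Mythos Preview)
Your proof is correct and follows essentially the standard resolvent approach: the paper itself does not reprove this lemma but simply cites \cite[Lemma~3.4]{fllx17} (a variant of \cite[Theorem~1]{weis75}), whose argument is exactly the one you reproduce---dominate $v$ by a constant $\lambda\ge\|v\|_\infty$, use the positive resolvent $r_\lambda$ to rewrite \eqref{eq:varcoevol} as $y=(h-r_\lambda*h)+T y$ with $T$ positivity-preserving and quasi-nilpotent, then invert by Neumann series. Your verification of the identity $G-R_\lambda G=\lambda^{-1}R_\lambda$ and of the kernel bound $r_\lambda(t)\le\lambda t^{\gamma-1}$ are both correct, so the argument goes through without gaps.
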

The proof is exactly the same as \cite[Lemma 3.4]{fllx17}, though we only assume $v\in L^{\infty}[0, T]$ here.
Next, we provide a new proof for the strict monotonicity in initial value. We also prove the stability of solutions with respect to initial values.
\begin{theorem}\label{lmm:perturbinitial}
Assume that $f(\cdot,\cdot)\in L_{\loc}^{\infty}([0,\infty)\times\mathbb{R})$. Moreover, assume for every compact set $K$, there is $L_K>0$ such that $|f(t,u)-f(t,v)|\le L_K|u-v|$ for a.e. $(t,u), (t,v)\in K$.  Then, for a given initial value $u_0$,  the solution in $L_{\loc}^{\infty}[0, T_b)$ is unique.  Further, we have
\begin{itemize}
\item Any two solutions $u_i\in L_{\loc}^{\infty}[0, T_b^i)$ $(i=1,2)$ with initial values $u_{1,0}<u_{2,0}$ satisfy $u_1(t)<u_2(t)$ on $[0, \min(T_b^1, T_b^2))$. 

\item For any $T>0$, $M>0$, there exists $C(M,T)>0$ such that any two solutions with $\|u_i\|_{L^{\infty}[0, T]}\le M$ ($i=1,2$) and initial values $u_{1,0}, u_{2,0}$ satisfy
 \[
 \|u_1-u_2\|_{L^{\infty}[0,T]}\le C(M,T)|u_{1,0}-u_{2,0}|.
 \]
 \end{itemize}
\end{theorem}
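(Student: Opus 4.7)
The plan is to recast the theorem as a single linear Volterra integral equation for the difference $y := u_2 - u_1$ and then combine Lemma~\ref{lmm:pos} with a standard fractional Gr\"onwall estimate. Under the $L^\infty_{\loc}$ hypothesis on $f$, the group-property argument behind Proposition~\ref{pro:equi} applies unchanged, so each $u_i$ satisfies \eqref{eq:vol}. Subtracting the two integral equations and setting
\begin{equation*}
v(s) := \frac{f(s,u_2(s))-f(s,u_1(s))}{u_2(s)-u_1(s)} \text{ when } u_2(s)\neq u_1(s), \quad v(s) := 0 \text{ otherwise},
\end{equation*}
the local Lipschitz hypothesis together with $\|u_i\|_{L^\infty[0,T]} \le M$ gives $\|v\|_{L^\infty[0,T]} \le L_M$. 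Hence $y$ satisfies the integral equation \eqref{eq:varcoevol} with coefficient $-v/\Gamma(\gamma) \in L^\infty$ and right-hand side $h \equiv u_{2,0}-u_{1,0}$.

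For the strict monotonicity, take $u_{1,0}<u_{2,0}$, so $h \equiv c > 0$ is a positive constant. The positivity hypothesis of Lemma~\ref{lmm:pos} reduces to checking $c - c\int_0^t r_\lambda(s)\,ds > 0$ for every $\lambda > 0$. From $r_\lambda(t) = -\frac{d}{dt} E_\gamma(-\lambda\Gamma(\gamma)t^\gamma)$ we get $\int_0^t r_\lambda(s)\,ds = 1 - E_\gamma(-\lambda\Gamma(\gamma)t^\gamma)$, so the quantity equals $c\,E_\gamma(-\lambda\Gamma(\gamma)t^\gamma) > 0$ by the classical positivity of the Mittag-Leffler function on the non-positive real axis for $0<\gamma<1$. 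Lemma~\ref{lmm:pos} then yields $y > 0$ a.e., proving the strict inequality $u_1 < u_2$.

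Uniqueness and stability both follow by applying a fractional Gr\"onwall estimate to $|y|$. When $u_{1,0} = u_{2,0}$ the Volterra equation gives
\begin{equation*}
|y(t)| \le \frac{L_M}{\Gamma(\gamma)}\int_0^t (t-s)^{\gamma-1}|y(s)|\,ds,
\end{equation*}
which forces $y \equiv 0$ and hence uniqueness. In the stability case, adding the constant term produces
\begin{equation*}
|y(t)| \le |u_{2,0}-u_{1,0}| + \frac{L_M}{\Gamma(\gamma)}\int_0^t (t-s)^{\gamma-1}|y(s)|\,ds,
\end{equation*}
and the classical fractional Gr\"onwall inequality delivers $\|y\|_{L^\infty[0,T]} \le |u_{2,0}-u_{1,0}|\,E_\gamma(L_M T^\gamma)$, giving the desired bound with $C(M,T) := E_\gamma(L_M T^\gamma)$.

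The main (minor) obstacle I anticipate is recognizing that the positivity hypothesis of Lemma~\ref{lmm:pos}, when specialized to a constant right-hand side, collapses via the defining identity for $r_\lambda$ exactly to positivity of the Mittag-Leffler function on $(-\infty,0]$; once this is observed, the rest is routine bookkeeping combining the Volterra reformulation with a standard fractional Gr\"onwall estimate.
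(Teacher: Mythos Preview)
Your proposal is correct and follows essentially the same route as the paper: rewrite the difference as a Volterra equation with bounded coefficient, verify the positivity hypothesis of Lemma~\ref{lmm:pos} for constant $h$ via $E_\gamma(-\lambda\Gamma(\gamma)t^\gamma)>0$, and finish with a fractional Gr\"onwall bound. The only cosmetic difference is that the paper normalizes by setting $y=(u_2-u_1)/(u_{2,0}-u_{1,0})$ so that $h\equiv 1$; one small point worth making explicit is that the boundedness of $f(s,u_i(s))$ forces each $u_i$ to be continuous, which is what upgrades the a.e.\ conclusion $y>0$ from Lemma~\ref{lmm:pos} to a pointwise strict inequality on $[0,\min(T_b^1,T_b^2))$.
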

\begin{proof}
Fix $T\in (0, \min(T_b^1, T_b^2))$. There exists $K$ compact such that for a.e $t\in [0, T]$, $(t, u_i(t))\in K$. By Proposition \ref{pro:equi}, one has
\[
u_i(t)=u_{i,0}+\frac{1}{\Gamma(\gamma)}\int_0^t(t-s)^{\gamma-1}f(s, u_i(s))\,ds.
\]
The boundedness of $f(s, u_i(s))$ implies that $u_i(t)\in C[0, T]$. If $u_{1,0}=u_{2,0}$, by taking the difference, $|u_1(t)-u_2(t)|\le C\int_0^t(t-s)^{\gamma-1}|u_1(s)-u_2(s)|\,ds$ and the uniqueness therefore follows.

 Now, assume $u_{1,0}\neq u_{2,0}$. Define $y(t)=(u_2(t)-u_1(t))/(u_{2,0}-u_{1,0})$, we  have
\[
y(t)+\int_0^t(t-s)^{\gamma-1}v(s)y(s)\,ds =1, \text{ where }v(s)=-\frac{1}{\Gamma(\gamma)} \frac{f(s,u_2(s))-f(s, u_1(s))}{u_{2}(s)-u_{1}(s)}.
\]
If $u_1(s)=u_2(s)$, we define $v(s)=0$. Note that $|v|\le L_K/\Gamma(\gamma)$ a.e. for $t\in (0, T)$. By setting $h=1$ in Lemma  \ref{lmm:pos}, one has
\[
1-\int_0^t r_{\lambda}(t-s)\,ds=E_{\gamma}(-\lambda \Gamma(\gamma)t^{\gamma})>0.
\]
By Lemma \ref{lmm:pos}, $y(t)>0$. Since $y$ is continuous{\color{red},} satisfying 
\[
y(t)\le 1+\int_0^t(t-s)^{\gamma-1}\|v\|_{L^{\infty}[0,T]}y(s)\,ds,
\]
we have $y(t)\le C(\|v\|_{L^{\infty}}, T)$ by \cite[Proposition 5]{fllx17}. This verifies the last claim.
\end{proof}

\section*{Acknowledgements}
The work of J.-G Liu was partially supported by KI-Net NSF RNMS11-07444 and NSF DMS-1514826. Y. Feng was supported by NSF DMS-1252912.

\bibliographystyle{plain}
\bibliography{nonlinearFODE}

\end{document}